\newtheorem{prop}{Proposition}
\newtheorem{nt}{Remark}
\newtheorem{Th}{Theorem}
\newfont{\ssdbl}{msbm8}
\newfont{\sdbl}{msbm9}
\newfont{\dbl}{msbm10 at 12pt}
\newcommand{\Ob}{\mathop {\rm Ob}}
\newcommand{\oo}{{\cal O}}
\newcommand{\Hom}{\mathop {\rm Hom}}
\newcommand{\Aut}{\mathop {\rm Aut}}
\newcommand{\End}{\mathop {\rm End}}
\newcommand{\Spec}{\mathop {\rm Spec}}
\newcommand{\Frac}{\mathop {\rm Frac}}
\newcommand{\da}{\mathbb{A}}
\newcommand{\Z}{\dz}
\newcommand{\lto}{\longrightarrow}
\newcommand{\df}{\mathbb{F}}
\newcommand{\zz}{\mathbf{Z}}
\newcommand{\D}{{\cal D}}
\newcommand{\M}{{\cal M}}
\newcommand{\p}{{\cal P}}
\newcommand{\el}{{\cal L}}
\def\Z{{\mathbb Z}}
\newcommand{\Gal}{{\rm Gal}}
\newcommand{\Fq}{{\mathbb F}_q}
\newcommand{\diag}{{\rm diag}}
\newcommand{\pic}{{\p ic}}
\newcommand{\picz}{{\p ic^\Z}}
\newcommand{\co}{{\rm Comm}}
\newcommand{\ddet}{{\D et}}
\newcommand{\Nm}{{\mathop{\rm Nm}}}
\newcommand{\quash}[1]{}
\begin{document}

\author{D. V. Osipov
\footnote{This work was partially  supported by
Russian Foundation for Basic Research (grants no.~11-01-00145-a,  no.~12-01-33024 mol\_a\_ved and no.~13-01-12420 ofi\_m2)
and by the Programme for the Support of Leading Scientific Schools of the Russian Federation (grant no.~NSh-5139.2012.1).}}

\title{Noncommutative reciprocity laws on algebraic surfaces: a case of tame ramification}
\date{}

\maketitle

\abstract{We prove non-commutative reciprocity laws on an algebraic surface defined over a perfect field. These reciprocity laws claim the splittings of some  central extensions of globally constructed groups over some subgroups constructed by points or projective curves on a surface. For a two-dimensional local field with a finite last residue field the constructed local central extension is isomorphic to a central extension which comes from the case of tame ramification of the Abelian two-dimensional local Langlands correspondence
suggested by M.~Kapranov.}

\section{Introduction}

The goal of this paper is to prove some non-commutative reciprocity laws on algebraic surfaces.
Let $X$ be a normal irreducible algebraic surface over a perfect field $k$. For any element $a $ from the multiplicative group of the field $k(X)$ of rational functions on $X$ we construct a central extension $\widehat{GL_{n,a}(\da_X)}$ of the group $GL_n(\da_X)$ by the group $k^*$.
Here $\da_X$ is the Parshin-Beilinson adelic ring of the surfaces $X$. By definition, we have
$$
\da_X \subset \prod_{x \in C} K_{x,C}  \mbox{,}
$$
where the product is taken of all pairs: a point $x \in X$ and an irreducible curve $C \subset X$ which contains the point $x$.
The ring $K_{x,C}$ is canonically  constructed by such a pair $\{ x \in C \}$ and this ring is a finite product of two-dimensional local fields.

Let $K_{x,C} = \prod\limits_{i=1}^l K_i$, where $K_i$ is a
two-dimensional local field. Let $K=K_i= k'((u))((t))$ for some $1
\le i \le l$. The restriction of the central extension
$\widehat{GL_{n,a}(\da_X)}$  (when $n \ge 2$) to the subgroup
$GL_n(K)$ of the group $GL_n(\da_X)$ is described by some element
from the group $\Hom(K_2(K), k^*)$. In proposition~\ref{prop} we
calculate this element, which  is given as the following map:
\begin{equation} \label{intr}
K_2(K)  \ni (f,g)  \mapsto \Nm_{k'/k} (f,g,a)_K  \in k^*  \mbox{,}
\end{equation}
where $(\cdot, \cdot, \cdot)_K$ is the two-dimensional tame symbol.

If $k'=\df_q$, then map~\eqref{intr}   coincides with the two-dimensional local reciprocity map for the Abelian Kummer extension $ K(a^{1/q-1})$ of the field $K$.

We prove noncommutative reciprocity laws in theorem~\ref{th1} for the central extension  $\widehat{GL_{n,a}(\da_X)}$. Let $x$ be  a point on $X$
and $K_x$ be a subring of the field $\Frac{\hat{\oo}_x}$ such that $K_x = k(X) \cdot \hat{\oo}_x$. The central extension $\widehat{GL_{n,a}(\da_X)}$
splits over the subgroup $GL_n(K_x)$ of the group $GL_n(\da_X)$. Let $C$ be a projective irreducible curve on $X$ and the field $K_C$ be the completion of the field $k(X)$ with respect to the discrete valuation given by the curve $C$.
The central extension $\widehat{GL_{n,a}(\da_X)}$
splits over the subgroup $GL_n(K_C)$ of the group $GL_n(\da_X)$. Besides, if $X$ is a projective surface, then the central extension $\widehat{GL_{n,a}(\da_X)}$
splits over the subgroup $GL_n(k(X))$ of the group $GL_n(\da_X)$.

We note that the main technical tool for this paper comes from
paper~\cite{OsZh}. In particular, we use categorical central
extensions of groups by the Picard groupoid of $\Z$-graded
one-dimensional $k$-vector spaces. These groups
 act on $2$-Tate $k$-vector spaces.

The paper is organized as follows. In \S~\ref{sec2} we recall some
explicit formulas from the two-dimensional local class field theory
for  Kummer extensions of two-dimensional local fields. In
\S~\ref{sec3} we recall some calculations of the group $H^2(GL_n(K),
k^*)$ (when $n \ge 2$) and relation of  central extensions of this
group with the algebraic $K$-theory of the field $K$. In
\S~\ref{sec4} we recall some categorical notions and constructions
from~\cite{OsZh}. In \S~\ref{centr} we construct the central
extension $\widehat{GL_{n,a}(\da_X)}$ and study its main properties
(see propositions~\ref{restr}  and~\ref{prop}). We prove also the
noncommutative reciprocity laws (see theorem~\ref{th1}).

\section{Some explicit formulas of two-dimensional class field theory} \label{sec2}
Let $K= \Fq((u))((t))$ be a two-dimensional local field, where $\Fq$ is a finite field.
We suppose that $\mu_m \subset \Fq^*$, where the group $\mu_m$ is the group of all roots of unity of degree $m$.
We consider an element $a \in K^*$ and an extension $L = K(a^{1/m})$. By the Kummer theory, the field $L$ is
a Galois extension of the field $K$ and $$\Gal (L/K) \cong \mu_l  \mbox{,}$$ where $\mu_l \subset \mu_m$ and $l \mid m$.

By the local class field theory for the two-dimensional local  field $K$ (see~\cite{Pa}) there exists a
surjective reciprocity map
$$
\phi_{L/K} \; : \; K_2(K)  \longrightarrow \Gal (L/K)  \mbox{.}
$$
The map $\phi_{L/K}$ is explicitly written as
\begin{equation}  \label{symb}
\phi_{L/K} ((f,g))  = {(f,g,a)_K}^{\frac{q-1}{m}}  \mbox{,}
\end{equation}
where elements $f$ and $g$ are from $K^*$ and
$$
(\cdot, \cdot, \cdot)_K  \; : \; K_3^M(K) \lto \Fq^*
$$
is the two-dimensional tame symbol. The map $(\cdot, \cdot, \cdot)_K$ is the composition of the boundary maps in
Milnor K-theory and it is defined for any field $K=k((u))((t))$ where we changed the field $\Fq$ to a field $k$ and  values of  $(\cdot, \cdot, \cdot)_K$ are in $k^*$. An explicit formula for the map  $(\cdot, \cdot, \cdot)_K$ is the following:
\begin{equation} \label{2tame}
(f,g,h)_K = {\rm sgn}(f,g,h) f^{\nu_K(g,h)} g^{\nu_K(h,f)} h^{\nu_K(f,g)} \mod m_K  \mod m_{\bar{K}}  \mbox{,}
\end{equation}
where
elements $f$, $g$ and $h$ are from $K^*$,
$m_K$ is the maximal ideal of the discrete valuation ring of the field $K$, $m_{\overline{K}}$ is the maximal ideal of the discrete valuation ring of the field $\overline{K}=k((u))$. Besides, ${\rm{sgn}} (f,g,h) = (-1)^A$, where
$$
A = \nu_K(f,g)\nu_K(f,h) + \nu_K(g,h) \nu_K(g,f) + \nu_K(h,f) \nu_K(h,g) +
\nu_K(f,g) \nu_K(g,h) \nu_K(h,f)  \mbox{,}
$$
 and the map $\nu_K(\cdot, \cdot) : K_2(K)  \lto \Z$ is also a composition of some boundary maps in Milnor K-theory
 with an explicit formula:
 $$
 \nu_K(f,g) = \nu_{\overline{K}} \left(  \frac{f^{\nu_K(g)}}{g^{\nu_K(f)}} \mod m_K \right)  \mbox{,}
 $$
where $\nu_K : K^* \to \Z$ and $\nu_{\overline{K}} : \overline{K}^* \to \Z$ are discrete valuations of the corresponding fields.

\section{Reminding on some calculations}  \label{sec3}
We change the ground field $\df_q$
to a perfect field $k$.
We recall some calculations from~\cite[\S~2.2]{Os}.
For any $n \ge 2$ we have the following formula from remark~2 of~\cite{Os}:
\begin{equation}  \label{coh}
H^2(GL_n(K), k^*) = H^2(K^*, k^*)  \oplus \Hom(K_2(K), k^*)  \mbox{.}
\end{equation}

We will be interested in central extensions of the group $GL_n(K)$ by the group $k^*$ such that these central extensions (up to isomorphism) come from elements of the group  $\Hom(K_2(K), k^*)   $  via formula~\eqref{coh}.
Such central extensions are in one-to-one correspondence with  central extensions which are isomorphic to the trivial cental extension after the restriction to the subgroup $K^*$ of the group $GL_n(K)$, where the group $K^*$ is embedded in the upper left corner of  the matrix group $GL_n(K)$.

\begin{nt} \label{rem}
{\em We consider in this paper the subgroup $K^*$ embedded into the upper left corner of the group $GL_n(K)$.
Since an inner automorphism of the group $GL_n(K)$ does not change the isomorphism class of any central extension
of this group, we could fix an embedding of the group $K^*$ into any other place of the diagonal. We will obtain the same results.}
\end{nt}

If a central extension
$$
1 \lto k^* \lto \hat{G}  \lto GL_n(K)  \lto 1
$$
splits  (i.e. it is isomorphic to trivial) over the subgroup $K^* \hookrightarrow GL_n(K)$, then the corresponding element
from $\Hom(K_2(K), k^*)$, which we call the symbol, is obtained as following:
\begin{equation}  \label{form}
\{x,y\} = <\diag (y,1, \ldots, 1), \diag(1,x,1, \ldots, 1)>   \mbox{,}
\end{equation}
where $x$ and $y$ are from $K^*$, $\{x,y \} \in k^*$
and $<\cdot, \cdot>$ is the commutator of the lifting of two commuting elements from $GL_n(K)$ to $\hat{G}$.

\medskip

Our goal in this paper is to construct a central extension of the group $GL_n(K)$ without using of algebraic $K$-theory such that this central extension will correspond to the symbol given by  map~\eqref{symb} (when $m =q-1$, then map~\eqref{symb} makes sense for the field $k$, because this map comes from the two-dimensional tame symbol,  and  map~\eqref{symb}  depends on an element $a \in K^*$).  This construction has also to be true if we change the field $K$ to the ring $\da_X$  of the Parshin-Beilinson adeles of an algebraic surface $X$ defined over the field $k$. We will also prove
the reciprocity laws for central extensions which will be constructed in such a way.

\begin{nt} {\em
When $n=2$ and $k=\Fq$, starting from a symbol given by map~\eqref{symb}  we obtain a central extension which correspond to the Abelian two-dimensional local
Langlands correspondence suggested by M.~Kapranov, see~\cite{K} and also~\cite{Os}.}
\end{nt}

\section{Reminding on some categorical notions} \label{sec4}

We recall some notions from~\cite{OsZh}.

By definition, a Picard groupoid $\p$ is a symmetric monoidal
group-like groupoid. We will consider the following two examples of Picard groupoids.
\begin{itemize}
\item
$\p = \pic $ is the groupoid of one-dimensional $k$-vector spaces where $k$ is a field. (Here groupoid means that we consider only isomorphisms in the category of one-dimensional $k$-vector spaces.)

\item
$\p = \picz$ is the groupoid of $\Z$-graded one-dimensional $k$-vector spaces. In other words, $X \in \Ob(\picz)$
iff $X= (l,n)$ where $l \in \Ob(\pic)$, $n \in \Z$ and
$$ \Hom\nolimits_{\picz} ((l_1, n_1), (l_2,n_2)) = \left\{
\begin{array}{rcl}
\Hom\nolimits_{\pic}(l_1,l_2)= \Hom_k(l_1,l_2) \setminus 0  & \mbox{if} & n_1=n_2 \\
\emptyset  &  \mbox{if} & n_1 \ne n_2
\end{array}
 \right. $$
We put $(l_1,n_1) \otimes (l_2, n_2) = (l_1 \otimes l_2, n_1 + n_2)$.
\end{itemize}

The main difference between Picard groupoids $\pic$ and $\picz$ is  commutativity constraints:
$X \otimes Y \to Y \otimes X$ (where $X$ and $Y$ are either objects of $\pic$ or objects of $\picz$).
Indeed,
\begin{itemize}
\item if $X=l_1$, $Y =l_2$ are objects from  $\pic$, then the commutativity constraint   $c_{l_1,l_2}: l_1 \otimes l_2 \to l_2 \otimes l_1$ is the usual isomorphism; \\
\item if $X=(l_1, n_1)$, $Y = (l_2, n_2)$ are objects from $\picz$, then the commutativity constraint
is equal to
 $$(-1)^{n_1n_2} c_{l_1, l_2}  \in
 \Hom\nolimits_{\picz}((l_1 \otimes l_2, n_1 + n_2), (l_2 \otimes l_1, n_1 + n_2)) \mbox{.}$$

\end{itemize}

We introduce a natural Picard groupoid $\zz$. The objects of $\zz$ are elements of the group $\Z$, i.e. integer numbers.
The morphisms in $\zz$ are only the identity morphisms. In other words, for any integers $m$ and $n$ we have $$\Hom\nolimits_{\zz} (m,n)= \left\{
\begin{array}{rcl}
{\rm id}  & \mbox{if} & m=n \\
\emptyset &  \mbox{if}         & m \ne n \mbox{.}
\end{array}
    \right.  $$
We note that there is a  symmetric monoidal functor between Picard groupoids:
$$
\psi  \; : \; \picz \lto \zz      \quad   \mbox{,} \quad \psi((l,n))=n   \mbox{.}
$$

Besides, there is a monoidal functor
$$F_{\pic} \; :\; \picz  \lto \pic  \quad   \mbox{,} \quad F_{\pic}((l,n))=l  \mbox{.} $$
The functor $F_{\pic}$ is not symmetric, i.e. this functor does not preserve the commutativity constraints.

\medskip

Let $G$ be a group and $\p$ be a Picard groupoid.
We recall (see~\cite[\S~2D]{OsZh}) that a Picard groupoid $H^1(BG, \p)$ is the groupoid of  monoidal functors
from the discrete monoidal category $G$ to the monoidal category $\p$. In other words, let $f$ be an object of
  $H^1(BG, \p)$, then for any element $g \in G$ we have $f(g) \in \Ob(\p)$ and for any $g_1, g_2 \in G$
  we have an isomorphism $f(g_1g_2) \simeq f(g_1) + f(g_2)$\footnote{Here and in the sequel we use also the notation $+$ for the monoidal product in a Picard groupoid.} compatible with the associativity condition in $\p$.

  If $\p = \pic$, then the groupoid $H^1(BG, \p)$
   is equivalent to the groupoid of central extensions of the group $G$ by the group $k^*$. Hence,
  the group  of isomorphism classes of objects of $H^1(BG, \p)$ is isomorphic to the group  $H^2(G, k^*)$.
   We note that the functor $F_{\pic}$ induces a functor (which is not monoidal):
   \begin{equation} \label{red}
    H^1(BG, \picz)  \lto H^1(BG, \pic) \quad :  \quad f \mapsto F_{\pic} \circ f \mbox{.}
   \end{equation}

Let $f$ be an element  from the set $\Ob(H^1(BG, \p)) $ (where $\p$ is any Picard groupoid). For any $g_1, g_2 \in G$ such that $[g_1, g_2]=1$
there is an element $\co(f)(g_1, g_2) \in \Aut_{\p}(e) = \End_{\p}(e)$,
where $e$ is a unit object in $\p$  (see~\cite[Lem.-Def.~2.5]{OsZh}). The map $\co(f)$ is
 a map from the set of pairs of commuting elements of $G$ to the Abelian  group $\Aut_{\p}(e)$. This map
 is
 a bimultiplicative and antisymmetric map with respect to elements $g_1$ and $g_2$.

If $\p = \pic$, then $\co(f)(g_1,g_2)=<g_1, g_2> \in k^*$, where $<g_1, g_2>$ is the commutator of the lifting of elements $g_1$ and $g_2$ to the central extension of the group $G$ by the group $k^*$ such that this central extension corresponds to $f$.

If $\p = \picz$, then $\co(f)(g_1,g_2) \in k^*$. Moreover,
from the construction of the map $\co(f)$  we have the following property.
If $f(g_1)= (l_1, n_1) \in \Ob(\picz)$,
$f(g_2)= (l_2, n_2) \in \Ob(\picz)$ where $n_i \in \Z$, then after the composition of $f$ with the functor $F_{\pic}$
we obtain
\begin{equation}  \label{dvad}
\co(f)(g_1, g_2)= (-1)^{n_1n_2} \co( F_{\pic} \circ f)(g_1, g_2)= (-1)^{n_1n_2}<g_1, g_2>  \mbox{,}
\end{equation}
 where $<g_1, g_2>$ is the commutator of the lifting of elements $g_1$ and $g_2$ to the central extension of the group $G$ by the group $k^*$ such that this central extension corresponds to $\picz \circ f$.

\medskip

Let $G$ be any group and $\p$ be any Picard groupoid. We recall (see~\cite[\S~2E]{OsZh})
that objects of the Picard groupoid $H^2(BG, \p)$ are (categorical) central extensions $\el$ of the group $G$ by the Picard groupoid
$\p$:
$$
1 \lto \p \stackrel{i}{\lto} \el \stackrel{\pi}{\lto} G \lto 1  \mbox{,}
$$
where $\el$ is a group-like monoidal groupoid, $G$ is a discrete group-like monoidal groupoid constructed by the group $G$, $i$ and $\pi$ are monoidal functors. More explicitly, a central extension $\el$  can be given in the following way. For any $g \in G$ we have a $\p$-torsor $\el_g= \pi^{-1}(g)$ and for any $g_1, g_2 \in G$  we have a natural equivalence of $\p$-torsors:
$$
\el_{g_1 g_2}  \simeq \el_{g_1} + \el_{g_2}
$$
together with  some further isomorphisms between equivalences and   compatibility conditions  on these  isomorphisms. We have $\Ob(\el)= \bigcup_{g \in G} \Ob(\el_g)$. If $X \in \Ob(\el_{g_1})$ and $Y \in \Ob(\el_{g_2})$, then
$$
\Hom\nolimits_{\el}(X,Y) = \left\{
\begin{array}{rcl}
\Hom_{\el_{g_1}}(X,Y)  & \mbox{if} & g_1=g_2 \\
\emptyset  & \mbox{if} & g_1 \ne g_2  \mbox{.}
\end{array}
\right.
$$

\begin{nt}  \em
If $\p = \pic $, then the group of isomorphism classes of objects of $H^2(BG, \p)$ is isomorphic
to the group $H^3(G, k^*)$  (see, for example,~\cite{Br2}).
\end{nt}

We note that the functor $\psi: \picz \lto \zz$ induces a symmetric monoidal functor $\Psi$ from the Picard $2$-groupoid of $\picz$-torsors to the Picard groupoid of $\zz$-torsors by the rule:
$$
\Psi(\M) = \zz
\stackrel{\picz}{\wedge}
\M   \mbox{,}
$$
where $\M$ is a $\picz$-torsor (see details in~\cite[\S~6.7]{Br1}). The Picard groupoid of $\zz$-torsors is equivalent to the Picard groupoid of $\Z$-torsors. Using the functor $\Psi$ we obtain a symmetric monoidal functor
$\tilde{\Psi}$ from the Picard groupoid $H^2(BG, \picz)$ to the Picard groupoid of central extensions of the group $G$
by the group $\Z$:
 $$
 \el \in H^2(BG, \picz) \mbox{,} \qquad \tilde{\Psi}(\el)(g)= \Psi(\el_g)   \quad \mbox{for any}  \quad g \in G
 $$
 (We note that  the group of isomorphism classes of objects of the  Picard groupoid of central extensions of the group $G$
by the group $\Z$ is isomorphic to $H^2(G, \Z)$.)

 For any object $\el$ of the Picard groupoid  $H^2(BG, \p)$,
for any $g_1, g_2 \in G$ such that $[g_1,g_2]=1$ there is an object $C_2^{\el}(g_1, g_2)$ of the Picard groupoid $\p$ (see~\cite[Lem.-Def.~2.13]{OsZh}). The morphism $C_2^{\el}$ is a bimultiplicative and antisymmetric (with respect to $g_1$ and $g_2$)
morphism from the set of pairs of commuting elements of $G$ to the Picard groupoid $\p$. An object
$C_2^{\el}(g_1, g_2)$ comes from
an auto-equivalence of the $\p$-torsor $\el_{g_1 g_2}$. This auto-equivalence is
the composition of the following equivalences of $\p$-torsors:
\begin{equation}  \label{for1}
\el_{g_1 g_2}  \simeq \el_{g_1} + \el_{g_2}  \simeq \el_{g_2} + \el_{g_1}  \simeq \el_{g_2g_1}= \el_{g_1g_2}  \mbox{.}
\end{equation}

\begin{nt} \em
If $\p = \pic$, then the morphism  $C_2^{\el}$
is a weak biextension of the set of pairs of commuting elements of the group $G$ by the group $k^*$.
This weak biextension is described by
  partial symmetrizations of the $3$-cocycle
corresponding to $\el$ (see~\cite{Br2}).
\end{nt}

If $\p=\picz$ and $\el \in \Ob(H^2(BG, \p))$, then it is easy to see from formula~\eqref{for1} that after application of the functor $\psi: \picz \lto \zz$ we obtain
\begin{equation}  \label{trid}
\psi(C_2^{\el}(g_1,g_2)) = -<g_1, g_2>  \; \in  \; \Z  \mbox{,}
 \end{equation}
 where $<g_1, g_2>$ is the commutator of lifting of elements $g_1$ and $g_2$ to the central extension $\tilde{\Psi}(\el)$
 of the group $G$ by the group $\Z$.

 Let $G$ be any group, $\p$ be any Picard groupoid and a central extension $\el$ be from $\Ob(H^2(BG, \p))$. We fix an element $ g \in G$.  We denote by
 $Z_G(g) \subset G$ a subgroup which is the centralizer of  the element $g$. From morphism $C_2^{\el}$ we obtain an object
 $C_g^{\el}$ of    $H^1(BZ_G(g), \p) $ by the rule:
 \begin{equation}  \label{biext}
  \mbox{for any}  \quad h \in Z_G(g) \quad \mbox{we put} \quad C_g^{\el}(h)= C_2^{\el}(g,h) \in \Ob(\p) \mbox{.}
  \end{equation}

 For any $g_1, g_2, g_3 \in G$ such that $[g_i, g_j]=1$ ($1 \le i,j \le 3$) there is an element
 \begin{equation}  \label{C3}
 C_3^{\el}(g_1,g_2,g_3)= \co(C_{g_1}^{\el})(g_2,g_3) \in \Aut\nolimits_{\p}(e) \mbox{.}
 \end{equation}
The map $C_3^{\el}$ is
 a map from the set of triples of  pairwise commuting elements of $G$ to the Abelian  group $\Aut_{\p}(e)$. This map
 is
 a trimultiplicative and antisymmetric map with respect to elements $g_1$, $g_2$ and $g_3$
 (see~\cite[Prop.~2.17]{OsZh}).

\section{Central extensions} \label{centr}
Let $X$ be a normal irreducible algebraic surface  over a perfect field $k$ (in particular, $X$ is an integral scheme). We will try to keep the notation similar to~\cite[\S~3.2]{Os}.

Let $\Delta$ be some  subset of all pairs $\{ x \in C \}$ where $x \in X$ is a point and $C \subset X$ is an irreducible curve which contains the point $x$ (in other words, $C$ is an integral one-dimensional subscheme of $X$). Let $K_{x,C}$ be the ring canonically associated
with a pair $\{x \in C \}$ from  $\Delta$ (see, for example,~\cite[\S~2.2]{O4}). The ring $K_{x,C}$ is a finite product of two-dimensional local fields such that every two-dimensional local field from this product corresponds to a branch of the curve $C$ restricted to the formal neighbourhood of the point $x$. In particular, if $x$ is a regular point on $C$  and on $X$, then $K_{x,C}$
is a two-dimensional local field isomorphic to $k(x)((u))((t))$ where $k(x)$ is the reside field of the point $x$,
$t =0$ is a local equation of the curve $C$ in some affine neighbourhood of the point $x$ on $X$, $u=0$ is a local equation of a curve which is defined in  the same neighbourhood and is  transversal  to $C$ in $x$.
If $K_{x,C} = \prod\limits_i K_i$ where $K_i$ is a two-dimensional local field, then we denote
$\oo_{K_{x,C}}= \prod\limits_i \oo_{K_i}$ where $\oo_{K_i}$ is the discrete valuation ring of the field $K_i$.
In particular, if $x$ is a regular point on $C$  and on $X$, then the ring $\oo_{K_{x,C}}$ is isomorphic to  the ring $k(x)((u))[[t]]$.

Let $\da_X$  be the Parshin-Beilinson adelic ring of $X$ (see, for example, survey~\cite{O4}).
We have an embedding
$$
\da_X \subset \prod_{\{x \in C \}}  K_{x,C}  \mbox{.}
$$
Inside of the ring $\prod_{\{x \in C \}}  K_{x,C}$ we define subrings  $\da_{\Delta}$ and $\oo_{\da_{\Delta}}$ (which depend on the set  $\Delta$)\footnote{From~\cite[prop.~3.4, prop.~3.8]{O4} we have that the ring $\oo_{\da_{\Delta}}$ coincides with the adelic ring
defined by the construction of A.~A.~Beilinson (see~\cite[\S~3.2]{O4}) by means of the set of $2$-simplices $(C,x)$ (where $\{ x \in C \} \in \Delta$) of the simplicial set which is defined by scheme points of the surface  $X$.
Analogously, the ring  $\da_{\Delta}$ coincides with the adelic ring defined by the set of  $3$-simplices $(X,C,x)$ (where $\{ x \in C \} \in \Delta$). In our notation we identify irreducible varieties and corresponding generic scheme points.}:
$$
\da_{\Delta} = \da_X \cap \prod_{\{x \in C \} \in \Delta}  K_{x,C}  \qquad \mbox{and} \qquad
\oo_{\da_{\Delta}} = \da_X \cap \prod_{\{ x \in C \} \in \Delta}  \oo_{K_{x,C}}  \mbox{.}
$$

If $\Delta_1$ is a subset of the set $\Delta$, then we have
$$
\da_{\Delta} = \da_{\Delta_1}  \times \da_{\Delta \setminus \Delta_1}
\qquad \mbox{and}  \qquad
\oo_{\da_{\Delta}} = \oo_{\da_{\Delta_1}}  \times \oo_{\da_{\Delta \setminus \Delta_1}}  \mbox{.}
$$
Hence for any $n \ge 1$ we obtain
\begin{equation} \label{dec}
GL_n(\da_{\Delta}) = GL_n(\da_{\Delta_1})  \times GL_n(\da_{\Delta \setminus \Delta_1}) \mbox{.}
\end{equation}

Let $i_{\Delta_1, \Delta,n} : GL_n(\da_{\Delta_1})  \hookrightarrow  GL_n(\da_{\Delta})$
and $j_{\Delta_1, \Delta, n} : GL_n(\da_{\Delta})  \twoheadrightarrow GL_n(\da_{\Delta_1})$
be the embedding and the surjection of the groups with respect to decomposition~\eqref{dec}.

We have that
$\da_{\Delta}^n$ is a $C_2$-space over the field $k$ (see~\cite[Th.~1]{Osip}).
Besides, the group $GL_n (\da_{\Delta})$ acts on the space $\da_{\Delta}^n$ and this action is given by automorphisms of the $C_2$-space.
Moreover, $\da_{\Delta}^n$
is a complete $C_2$-vector space, i.e. it is a $2$-Tate vector space over the field $k$. The subspace
$\oo_{\da_{\Delta}}^n  \subset  \da_{\Delta}^n$  is a lattice in $\da_{\Delta}^n$. We fix this lattice. Now as in~\cite[\S~4C]{OsZh} we canonically construct (by means of Kapranov's graded determinantal theories) a categorical central extension $\ddet_{\da_{\Delta}^n}$ of the group
$GL_n(\da_{\Delta})$ by  the Picard groupoid $\picz$, i.e. we construct
$$
\ddet_{\da_{\Delta}^n} \; \in \; \Ob(H^2(BGL_n(\da_{\Delta}), \picz))  \mbox{.} $$

We fix some element $a\in \da_{\Delta}^*$. The element $\diag(a, \ldots, a)$
belongs to the center of the group $GL_n(\da_{\Delta})$. Therefore
$Z_{GL_n(\da_{\Delta})}(\diag(a, \ldots, a)) = GL_n(\da_{\Delta})$. Using formula~\eqref{biext} we obtain the following  object from $H^1(BGL_n(\da_{\Delta}), \picz)$:
\begin{equation} \label{ff3}
\ddet_{a,n} = C_{\diag(a, \ldots, a)}^{\ddet_{\da_{\Delta}^n}}  \mbox{.}
\end{equation}

We apply the functor $F_{\picz} : \picz \to \pic$ and obtain as in formula~\eqref{red}:
$$
\widetilde{\ddet_{a,n}} = F_{\picz}  \circ \ddet_{a,n}  \; \in \; \Ob(H^1(BGL_n(\da_{\Delta}), \pic))  \mbox{.}
$$
The object $\widetilde{\ddet_{a,n}}$ is given  by the central extension:
\begin{equation}  \label{tilcexp}
1 \lto k^*  \lto \widetilde{GL_{n,a}(\da_{\Delta})}   \stackrel{\theta}{\lto}
GL_n(\da_{\Delta})  \lto 1 \mbox{.}
\end{equation}
The   central extension~\eqref{tilcexp}  defines an element from the group $H^2(GL_n(\da_{\Delta}), k^*)$.

We know that $GL_n(\da_{\Delta}) = SL_n(\da_{\Delta}) \rtimes \da_{\Delta}^*$,
where the group $\da_{\Delta}^*$ is embedded into the upper left corner of the matrix group
$GL_n(\da_{\Delta})$ and acts by conjugations. We define a group
\begin{equation} \label{ff4}
\widehat{ GL_{n,a}(\da_{\Delta}) }= \theta^{-1}(SL_n(\da_{\Delta})) \rtimes \da_{\Delta}^* \mbox{.}
\end{equation}
The group $\widehat{ GL_{n,a}(\da_{\Delta}) }$ is a central extension of the group $GL_n(\da_{\Delta})$ by  the group $k^*$. This central extension splits over the subgroup $\da_{\Delta}^*  \subset GL_n(\da_{\Delta})$. We note that similar to remark~\ref{rem} we have that the embedding of the group $\da_{\Delta}^*$ into the other place of the diagonal of the matrix group $GL_n(\da_{\Delta})$ does not change the isomorphism class of the central extension $\widehat{ GL_{n,a}(\da_{\Delta}) }$ (compare also with remark~3 of~\cite{Os}).

\begin{prop} \label{restr}
We fix some element $a \in \da_{\Delta}^*$, where $\Delta$ is a subset of the set of all pairs
$\{ x\in C \}$ on $X$ as in the beginning of this section.
\begin{enumerate}
\item \label{delta}
Let $\Delta_1 \subset \Delta$ be a subset. The restriction of the central extension
$\widehat{ GL_{n,a}(\da_{\Delta}) }$ on the subgroup $GL_n(\da_{\Delta_1})$
embedded into the group $GL_{n,a}(\da_{\Delta})$ by
the homomorphism $i_{\Delta_1, \Delta, n}$ is isomorphic to the central extension
$\widehat{ GL_{n,j_{\Delta_1, \Delta,n}(a)}(\da_{\Delta_1}) }$
of the group $GL_n(\da_{\Delta_1})$ by the group $k^*$.
\item
Let $1 \le m \le n$. The restriction of the central extension
$\widehat{ GL_{n,a}(\da_{\Delta}) }$ on the subgroup $GL_m(\da_{\Delta})$
of the group $GL_n(\da_{\Delta})$ (embedded into the upper left corner)
is isomorphic to the central extension  $\widehat{ GL_{m,a}(\da_{\Delta}) }$
of the group $GL_m(\da_{\Delta})$ by the group $k^*$.
\end{enumerate}
\end{prop}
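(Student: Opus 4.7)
The plan is to reduce both claims to the additivity of the categorical determinantal central extension $\ddet_{\da_{\Delta}^n}$ of~\cite[\S~4C]{OsZh} with respect to orthogonal decompositions of $2$-Tate $k$-vector spaces with compatible lattices, and then to exploit the bimultiplicativity of the commutator pairing $C_2^{\ddet}$ recalled in~\eqref{biext}. In each case the object $\ddet_{a,n}$ of $H^1(BGL_n(\da_{\Delta}), \picz)$ is evaluated on an element embedded from a smaller group, and $\diag(a,\ldots,a)$ is factored as a product of two commuting pieces, one acting only on the ``relevant'' summand of the ambient $2$-Tate space and the other on its complement.

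For part~\ref{delta}, write $\da_{\Delta}^n = \da_{\Delta_1}^n \oplus \da_{\Delta\setminus\Delta_1}^n$ as $2$-Tate vector spaces over $k$, with lattice $\oo_{\da_{\Delta}}^n = \oo_{\da_{\Delta_1}}^n \oplus \oo_{\da_{\Delta\setminus\Delta_1}}^n$. Under the decomposition~\eqref{dec}, the element $\diag(a,\ldots,a)$ is the product of $\diag(a_1,\ldots,a_1)$ and $\diag(a_2,\ldots,a_2)$, where $a = (a_1,a_2)$ with $a_1 = j_{\Delta_1,\Delta,n}(a)$; both factors commute with every element in the image of $i_{\Delta_1,\Delta,n}$. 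By bimultiplicativity of $C_2^{\ddet_{\da_{\Delta}^n}}$, the second factor contributes trivially because it acts on the complementary summand on which $GL_n(\da_{\Delta_1})$ acts as the identity, while the first factor reproduces $\ddet_{a_1,n}$ on $GL_n(\da_{\Delta_1})$ by the additivity of $\ddet$ under the above direct-sum decomposition. Applying the functor $F_{\picz}$ of~\eqref{red} and forming the semidirect product in~\eqref{ff4} yields the required isomorphism of central $k^*$-extensions; Remark~\ref{rem} ensures that the two semidirect-product constructions, both using $\da_{\Delta}^*$ embedded into the upper left corner, are compatible. Part 2 is entirely parallel: decompose $\da_{\Delta}^n = \da_{\Delta}^m \oplus \da_{\Delta}^{n-m}$ with compatible lattices and factor
\[
\diag(a,\ldots,a) = \diag(a,\ldots,a,1,\ldots,1) \cdot \diag(1,\ldots,1,a,\ldots,a),
\]
with $m$ copies of $a$ in the first factor and $n-m$ in the second. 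For any $g \in GL_m(\da_{\Delta})$ embedded as $\diag(g,I_{n-m})$, the second factor acts only on the complementary summand and gives a trivial $C_2^{\ddet}$ contribution, while the first factor, via additivity of $\ddet$, reproduces precisely $\ddet_{a,m}(g)$.

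The main technical obstacle is the precise form of additivity of the graded determinantal theory: one must check that $\ddet_{V_1 \oplus V_2}$ is naturally equivalent, as a categorical central extension over $GL(V_1) \times GL(V_2) \hookrightarrow GL(V_1 \oplus V_2)$, to the external product of $\ddet_{V_1}$ and $\ddet_{V_2}$, with compatibility of the chosen lattices, and that $C_2^{\ddet}$ is canonically trivial for pairs acting on complementary subspaces (not merely trivial up to isomorphism), so that the factorization argument is coherent. This is built into the construction of~\cite[\S~4C]{OsZh}; once invoked, the remainder of the proof reduces to the bimultiplicativity and antisymmetry of $C_2^{\ddet}$ from~\cite[Lem.-Def.~2.13]{OsZh}, and the transfer from $\widetilde{GL_{n,a}(\da_{\Delta})}$ to $\widehat{GL_{n,a}(\da_{\Delta})}$ via the semidirect product with $\da_{\Delta}^*$ in both descriptions.
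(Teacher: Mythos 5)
Your proposal is correct and follows essentially the same route as the paper: reduce to the object $\ddet_{a,n}$ of $H^1(BGL_n(\da_{\Delta}),\picz)$, then apply the additivity $\ddet_{V_1}+\ddet_{V_2}\simeq\ddet_{V_1\oplus V_2}$ (Lemma~4.9 of~\cite{OsZh}) together with the compatibility of $C_2$ with sums of categorical central extensions (Lemma~2.15 of~\cite{OsZh}) and its bimultiplicativity. The paper states this more tersely by citing the two lemmas, whereas you spell out the direct-sum decompositions and the factorization of $\diag(a,\ldots,a)$ explicitly, but the underlying argument is the same.
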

\begin{proof}
Clearly, it is enough to prove the proposition for the central extensions $\widetilde{GL_{n,a}(\da_{\Delta})}$. Since the functor given by formula~\eqref{red}
commutes with the restriction   on a subgroup $H \subset G$, it is enough to proof the analogous
statements for $\ddet_{a,n}  \in \Ob(H^1(BGL_n(\da_{\Delta}), \picz))$. In the last case the statements of the proposition follow from Lemma~2.15 of~\cite{OsZh} which claims that
the morphism $C_2^{\el_1 + \el_2}$ is isomorphic to the morphism $C_2^{\el_1} + C_2^{\el_2}$ for any $\el_1 , \el_2 \in \Ob(H^2(BG, \picz))$ and from Lemma~4.9 of~\cite{OsZh}
which claims that if a group $G$ acts on $2$-Tate vector spaces $V_1$ and $V_2$,
 then
 \begin{equation}  \label{detfor}
 \ddet_{V_1} + \ddet_{V_2} \simeq
 \ddet_{V_1 \oplus V_2}
 \end{equation}
 in $H^2(BG, \picz)$.
 The proposition is proved.
\end{proof}

\bigskip

We consider now a case when the set $\Delta$ is a singleton, i.e. it corresponds to a pair $\{x \in C\}$ on $X$. We have $\da_{\Delta}=K_{x,C}= \prod\limits_{i=1}^l K_i$, where $K_i$ is a two-dimensional local filed. Let $K =K_i=k'((u))((t))$ for some
$i$, where $k' \supset k$ is a finite extension. Let the central extension $\widehat{ GL_{n,a}(K) }$ of the group $GL_n(K)$ by the group $k^*$ be a restriction of the central extension $\widehat{ GL_{n,a}(\da_{\Delta}) }$
 of the group $GL_{n}(\da_{\Delta})$ by the group $k^*$
 with respect to the morphism $GL_n(K) \hookrightarrow   GL_{n}(\da_{\Delta}) $. We note that similar to statement~\ref{delta} of proposition~\ref{restr} the central extension $\widehat{ GL_{n,a}(K) }$ depends on the image of the element $a \in K_{x,C}^*$ in $K^*$. Therefore we will assume that $a \in K^*$ in notation of the central extension  $\widehat{ GL_{n,a}(K) }$.

 \begin{prop} \label{prop}
 We fix an integer $n \ge 2$ and an element $a \in K^*$. Then  the central extension
 $\widehat{ GL_{n,a}(K) }$ of the group $GL_n(K)$ by the group $k^*$
 corresponds to the following element from $\Hom (K_2(K), k^*)$ (with respect to decomposition~\eqref{coh}):
 $$
 K_2(K) \ni (f,g) \; \mapsto \; \Nm_{k'/k} (f,g,a)_K  \in k^*  \mbox{,}
 $$
 where $(\cdot, \cdot, \cdot)_K$ is the two-dimensional tame symbol (see formula~\eqref{2tame}).
 \end{prop}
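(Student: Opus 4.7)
The plan is to read off the symbol $\{f,g\}$ corresponding to $\widehat{GL_{n,a}(K)}$ in $\Hom(K_2(K),k^*)$ via formula~\eqref{form}, i.e.\ as the commutator $\langle\diag(g,1,\dots,1),\diag(1,f,1,\dots,1)\rangle$ of lifts to $\widehat{GL_{n,a}(K)}$. By statement~(2) of Proposition~\ref{restr}, the restriction of $\widehat{GL_{n,a}(K)}$ to the upper-left $GL_2(K)\subset GL_n(K)$ coincides with $\widehat{GL_{2,a}(K)}$, so the computation reduces at once to $n=2$. Set $A=\diag(a,a)$, $g_1=\diag(g,1)$, $g_2=\diag(1,f)$, and let $\pi\colon GL_2(K)\to K^*$ be the homomorphism from the semidirect decomposition $GL_2=SL_2\rtimes K^*$.

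The auxiliary central extension $\widetilde{GL_{2,a}(K)}$ of~\eqref{tilcexp} need not split over the upper-left $K^*$; let $\alpha\in H^2(K^*,k^*)$ denote its restriction there. Since $\widehat{GL_{2,a}(K)}$ is obtained from $\widetilde{GL_{2,a}(K)}$ by construction~\eqref{ff4}, which kills exactly the $K^*$-component of its cohomology class under the splitting~\eqref{coh}, one has $[\widetilde{GL_{2,a}(K)}]=[\widehat{GL_{2,a}(K)}]+\pi^*\alpha$. Since $\pi(g_1)=g$ and $\pi(g_2)=f$, this gives
\[\{f,g\}=\langle g_1,g_2\rangle_{\widetilde{GL_{2,a}(K)}}\cdot\langle g,f\rangle_\alpha^{-1}.\]
Both factors are commutators in central extensions of the form $F_{\picz}\circ h$ with $h\in H^1(BG,\picz)$, so formulas~\eqref{dvad},~\eqref{trid} and~\eqref{C3} express them in terms of $C_2^{\ddet_{K^2}}$ and $C_3^{\ddet_{K^2}}$ on triples containing the central element $A$.

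Now decompose $K^2=K^{(1)}\oplus K^{(2)}$ as $2$-Tate $k$-vector spaces with lattice $\oo_K\oplus\oo_K$. Since all three matrices are diagonal, the additivity isomorphism~\eqref{detfor} (Lemma~4.9 of~\cite{OsZh}) splits both $C_2$ and $C_3$ as sums over the two summands. Trimultiplicativity of $C_3$ together with its vanishing on triples containing the identity gives $C_3^{\ddet_{K^2}}(A,g_1,g_2)=1$, whereas on the $\alpha$-side (where both commuting elements already lie in the upper-left $K^*$) only the $K^{(1)}$-summand contributes, producing $C_3^{\ddet_K}(a,g,f)$. By~\eqref{trid} applied to $\tilde\Psi(\ddet_K)$, the integers $n_1,n_2$ that control the signs $(-1)^{n_1n_2}$ in~\eqref{dvad} reduce to the iterated valuation $\nu_K(a,\cdot)$ of~\eqref{2tame}, and these signs are identical on the two sides and cancel in the ratio. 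One concludes
\[\{f,g\}=C_3^{\ddet_K}(a,g,f)^{-1}.\]

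The main obstacle — and the essential local input — is the identity
\[C_3^{\ddet_K}(a,g,f)=\Nm_{k'/k}(a,g,f)_K\]
for $K=k'((u))((t))$ viewed as a $2$-Tate $k$-vector space with lattice $\oo_K=k'((u))[[t]]$, identifying the trimultiplicative antisymmetric categorical invariant coming from Kapranov's graded determinantal theory with the norm from $k'$ to $k$ of the two-dimensional tame symbol~\eqref{2tame}. This is the principal technical calculation supplied by the machinery of~\cite{OsZh}. Granted it, antisymmetry of the tame symbol under the transposition of the outer entries yields $C_3^{\ddet_K}(a,g,f)^{-1}=\Nm_{k'/k}(f,g,a)_K$, which is exactly the asserted image of $(f,g)$ under the symbol map.
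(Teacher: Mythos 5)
Your argument is correct and rests on the same computational core as the paper's proof --- formula~\eqref{form}, the additivity~\eqref{detfor} over the coordinate decomposition of $K^n$, the cancellation of the $(-1)^{n_1n_2}$ signs coming from $\psi$, and the identification $C_3^{\ddet_K}(f,g,a)=\Nm_{k'/k}(f,g,a)_K$ from \cite[Th.~4.11]{OsZh} --- but you organize the passage from $\widetilde{GL_{n,a}(K)}$ to $\widehat{GL_{n,a}(K)}$ differently. The paper factors the \emph{group element}: it writes $\diag(1,f,1,\ldots,1)=\diag(f,1,\ldots,1)\cdot\diag(f^{-1},f,1,\ldots,1)$, kills the first commutator by the splitting over $K^*$, and notes that the remaining commutator involves an element of $SL_n(K)$, where $\widehat{GL_{n,a}(K)}$ and $\widetilde{GL_{n,a}(K)}$ literally coincide by~\eqref{ff4}; the transfer to $\widetilde{GL_{n,a}(K)}$ is then tautological. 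You instead factor the \emph{cohomology class}, asserting $[\widetilde{GL_{2,a}(K)}]=[\widehat{GL_{2,a}(K)}]+\pi^*\alpha$ and computing the two commutator pairings separately: the $\widetilde{GL_{2,a}(K)}$-pairing on $(\diag(g,1),\diag(1,f))$ is trivial up to sign because additivity splits it as $C_3^{\ddet_K}(a,g,1)\cdot C_3^{\ddet_K}(a,1,f)=1$, so the entire symbol is carried by the $\pi^*\alpha$ correction. That class identity is the one step you assert rather than prove, and it is not automatic: two classes on $GL_n=SL_n\rtimes K^*$ that agree on $SL_n$ and on $K^*$ need not a priori agree on $GL_n$. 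It is nevertheless true here --- sending a pair consisting of $(s,q)\in\theta^{-1}(SL_n)\rtimes K^*$ and a lift $y\in\widetilde{GL_{n,a}(K)}$ of $q$ to the product $s\cdot y$ defines an isomorphism of central extensions $\widehat{GL_{n,a}(K)}+\pi^*\bigl(\widetilde{GL_{n,a}(K)}|_{K^*}\bigr)\to\widetilde{GL_{n,a}(K)}$, since conjugation by the lift $y$ realizes the semidirect-product action --- but it deserves a line; the paper's commutator manipulation buys exactly the avoidance of this lemma. Your preliminary reduction to $n=2$ via proposition~\ref{restr} is harmless but unnecessary, as the computation is uniform in $n$. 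Both routes end at $\{f,g\}=C_3^{\ddet_K}(a,g,f)^{-1}=C_3^{\ddet_K}(f,g,a)=\Nm_{k'/k}(f,g,a)_K$, and both defer the genuinely hard local identity to \cite[Th.~4.11]{OsZh}, as you rightly flag.
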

\begin{proof}
Since, by  construction, the central extension
 $\widehat{ GL_{n,a}(K) }$  splits over the subgroup $K^*
 \subset GL_{n}(K)$, we have to calculate an expression given by formula~\eqref{form}.
 We have
\begin{multline*}
\{f,g  \} = < \diag(g,1, \ldots, 1), \diag(1,f,1, \ldots, 1)>= \\ =
<\diag(g,1, \ldots,1),\diag(f,1, \ldots,1)>
<\diag(g,1, \ldots, 1), \diag(f^{-1},f,1, \ldots,1)> \mbox{.}
\end{multline*}
Since the central extension
 $\widehat{ GL_{n,a}(K) }$  splits over the subgroup $K^*
 \subset GL_{n}(K)$, we have $<\diag(g,1, \ldots,1),\diag(f,1, \ldots,1)>=1$. Therefore we  have to calculate
 the value of
 $<\diag(g,1, \ldots, 1), \diag(f^{-1},f,1, \ldots,1)>$. Since the group
 $\widehat{ GL_{n,a}(K) }$ is a semidirect product,  the value of the last expression coincides with the value of the same expression  but calculated in the central extension $\widetilde{ GL_{n,a}(K) }$ (where the central extension $\widetilde{ GL_{n,a}(K) }$  of the group $GL_n(K)
 $ by the group $k^*$ is the restriction of the central extension
 $\widetilde{ GL_{n,a}(K_{x,C}) }$ on the subgroup $GL_n(K) \hookrightarrow GL_n(K_{x,c})$).
 Hence and from formula~\eqref{dvad}    we have
 \begin{multline*}
 <\diag(g,1, \ldots, 1), \diag(f^{-1},f,1, \ldots,1)>=  \\ = \co
 (\widetilde{\ddet_{a,n}})
 (\diag(g,1, \ldots, 1), \diag(f^{-1},f,1, \ldots,1))= \\
 = (-1)^B \co
 ({\ddet_{a,n}})
 (\diag(g,1, \ldots, 1), \diag(f^{-1},f,1, \ldots,1))  \mbox{,}
 \end{multline*}
 where
 $$
 B= \psi ( \ddet_{a,n} (\diag(g,1, \ldots, 1)))  \cdot
 \psi ( \ddet_{a,n} (\diag(f^{-1},f,1, \ldots, 1)))  \mbox{.}
 $$
  From formulas~\eqref{ff3}, \eqref{biext} and \eqref{trid}   we have
 \begin{multline*}
 \psi ( \ddet_{a,n} (\diag(f^{-1},f,1, \ldots, 1)))=
 \psi \left(C_{\diag(a, \ldots, a)}^{\ddet_{\da_{\Delta}^n}} (\diag(f^{-1},f,1, \ldots, 1))
 \right) = \\ =
 \psi \left(C_2^{\ddet_{\da_{\Delta}^n}} ({\diag(a, \ldots, a)} ,\diag(f^{-1},f,1, \ldots, 1)) \right)= \\ =
 -< \diag(a, \ldots, a),  \diag(f^{-1}, f. \ldots, 1)   >_{\Z}   \mbox{,}
\end{multline*}
where $< \cdot ,  \cdot >_{\Z}$ is the commutator of the lifting of commuting elements from the group $GL_n(K)$ to the central extension $\tilde{\Psi}(\ddet_{\da_{\Delta}^n})$
of the group $GL_n(K)$ by the group $\Z$. From formula~\eqref{detfor}
 we have that
 \begin{multline*}
 < \diag(a, \ldots, a),  \diag(f^{-1}, f, \ldots, 1)   >_{\Z}= \\=
 < \diag(a,1 , \ldots, 1), \diag(f^{-1}, 1, \ldots, 1)  >_{\Z}   +\\+ <\diag(1, a, 1, \ldots, 1),
 \diag(1,f,1, \ldots, 1)>_{\Z} =0 \mbox{.}
 \end{multline*}
Therefore $B=0$. Now, using formulas~\eqref{C3} and~\eqref{detfor} we have
\begin{multline*}
\co
 ({\ddet_{a,n}})
 (\diag(g,1, \ldots, 1), \diag(f^{-1},f,1, \ldots,1)) = \\ = C_3^{\ddet_{\da_{\Delta}^n} }
 (\diag(a, \ldots, a), \diag(g,1, \ldots, 1), \diag(f^{-1},f,1, \ldots,1))= \\ =
 C_3^{\ddet_K}(a,g,f^{-1}) =  C_3^{\ddet_K} (f,g,a) =\Nm_{k'/k} (f,g,a)_K \mbox{,}
\end{multline*}
where the last equality follows from~\cite[Th.~4.11]{OsZh}. From the above calculations we obtain
 $\{f,g  \} =\Nm_{k'/k} (f,g,a)_K$. The proposition is proved.
\end{proof}

\bigskip
Let $X$ be an algebraic surface as in the beginning of \S~\ref{centr}. By $k(X)$
we denote the field of rational functions on $X$. For any irreducible curve $C$ on $X$
we denote by $K_C$ a field which is the completion of the field $k(X)$ with respect to  a discrete valuation given by the curve $C$. For any point $x \in X$ we denote by $\hat{\oo}_x$ the completion of the local ring $\oo_x$ of the point $x$ on $X$ with respect to the maximal ideal. We introduce a ring
$K_x= k(X) \cdot \hat{\oo}_x$ where the product is taken in the field $\Frac(\hat{\oo}_x)$.
 We fix an integer $n \ge 1$. There is the diagonal embedding of the group
$GL_n(k(X))$ to the group $GL_n(\da_X)$ (through the embedding of the field $k(X)$ to the ring $K_{x,C}$
for any pair $\{ x \in C \}$).
For any irreducible curve $C$ on $X$ there is the embedding of the group $GL_n(K_C)$
to the group $GL_n(\da_X)$ (through the diagonal embedding of the field $K_C$ to the ring $K_{x,C}$ for any point $x$ on $C$ and we put $1 \in GL_n(K_{y,F})$ for any pair $\{y \in F\}$ when $F \ne C$). For any point $x$ on $X$ there is the embedding of the group $GL_n(K_x)$
to the group $GL_n(\da_X)$ (through the diagonal embedding of the ring $K_x$ to the ring $K_{x,C}$ for any irreducible curve $C \ni x$ and we put $1 \in GL_n(K_{y,F})$ for any pair $\{
y \in F \}$ when $y \ne x$).

\begin{Th}[Noncommutative reciprocity laws] \label{th1}
We consider a normal irreducible algebraic surface $X$  over a perfect field $k$.
We fix some element $a \in k(X)^*  \subset \da_X^*$ and an integer $n \ge 1$.  The central extension
 $\widehat{ GL_{n,a}(\da_X) }$
 of the group $GL_{n}(\da_X)$ by the group $k^*$
 splits canonically over the following subgroups of the group $GL_n(\da_X)$:
 \begin{itemize}
 \item[1)] \label{(1)} over the subgroup $GL_n(K_x)$ for any point $x$ on $X$,
 \item[2)] over the subgroup $GL_n(K_C)$
 for any irreducible projective curve $C$ on $X$,
 \item[3)] over the subgroup $GL_n(k(X))$ when $X$ is a projective surface.
\end{itemize}
\end{Th}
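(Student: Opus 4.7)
My strategy is to use Proposition~\ref{restr} to restrict the central extension $\widehat{GL_{n,a}(\da_X)}$ to carefully chosen subsets $\Delta$ of pairs $\{x \in C\}$, then to use Proposition~\ref{prop} to identify the resulting symbol class on the relevant torus, and finally to invoke the appropriate reciprocity law for the two-dimensional tame symbol on $X$.

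For part~1), fix a point $x \in X$ and let $\Delta_x$ consist of all pairs $\{x \in C\}$ with $x$ fixed and $C$ varying over the irreducible curves through $x$. The ring $K_x$ embeds diagonally in $\da_{\Delta_x}$, and by Proposition~\ref{restr}(\ref{delta}) the restriction of $\widehat{GL_{n,a}(\da_X)}$ to $GL_n(\da_{\Delta_x})$ is $\widehat{GL_{n,a}(\da_{\Delta_x})}$. Using the product decomposition~\eqref{dec} and the additivity~\eqref{detfor} of the determinantal construction, the latter extension is a Baer sum over $C \ni x$ of the local extensions on $GL_n(K_{x,C})$, each of which is controlled by Proposition~\ref{prop}. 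Pulling back along the diagonal embedding $K_x^* \hookrightarrow \prod_{C \ni x} K_{x,C}^*$, the symbol on the torus $K_x^* \subset GL_n(K_x)$ becomes $(f,g) \mapsto \prod_{C \ni x} \Nm_{k(x)/k}(f,g,a)_{K_{x,C}}$, which is trivial by Parshin's reciprocity law around the point $x$. Combined with the tautological splitting of $\widehat{GL_{n,a}(\da_X)}$ over $K_x^*$ coming from the semidirect-product construction~\eqref{ff4}, this yields the canonical splitting over the whole subgroup $GL_n(K_x)$.

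Parts~2) and~3) proceed along the same template but with $\Delta_x$ replaced, respectively, by $\Delta_C = \{(x,C) : x \in C\}$ for a fixed projective curve $C$, and by the full set of pairs in case~3). The vanishing of the corresponding symbol then follows, respectively, from residue reciprocity for the tame symbol along the projective curve $C$, and from the combination of point- and curve-reciprocity on the projective surface $X$. The main obstacle I anticipate is bridging the gap from triviality of the symbol (that is, of the commutator pairing on the diagonal torus) to a \emph{canonical} splitting of the full central extension of $GL_n$. I would resolve this by combining the cohomological decomposition analogous to~\eqref{coh} with the a priori splitting over $R^* \subset GL_n(R)$ (for $R \in \{K_x, K_C, k(X)\}$) supplied by~\eqref{ff4}: once the $\Hom(K_2, k^*)$-component is seen to vanish via the appropriate reciprocity, the remainder is determined by the torus splitting, yielding the desired canonical trivialization.
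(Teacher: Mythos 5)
Your overall template --- reduce via formulas \eqref{coh} and \eqref{form} to the commutator symbol on the diagonal torus, express that symbol as a product of local two-dimensional tame symbols, and invoke the appropriate reciprocity law --- is indeed how the paper argues parts 1) and 3), and your closing remark about upgrading the vanishing of the $\Hom(K_2,k^*)$-component to a splitting of the whole extension is exactly the paper's last step. But part 1) of your argument has a genuine gap, in two layers. First, the identification of the diagonal symbol with $\prod_{C\ni x}\Nm(f,g,a)_{K_{x,C}}$ does not follow from Propositions \ref{restr} and \ref{prop}: those control restriction to $GL_n(\da_{\Delta_1})$ for a sub-simplex and the symbol on a \emph{single} factor $GL_n(K)$, and formula \eqref{detfor} gives additivity only for finite direct sums. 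The diagonal $GL_n(K_x)$ sits inside an infinite restricted product, and there is no ``Baer sum over $C\ni x$'' of the local extensions; the computation of $C_3^{\ddet}$ for an adelic $2$-Tate space is exactly Theorem~4.11 of \cite{OsZh}, which the paper invokes together with the observation that only finitely many places contribute. Second, and more seriously, the reciprocity law ``around the point $x$'' is a statement about the product over \emph{all} height-one prime ideals $F$ of $\hat{\oo}_x$, i.e.\ over all formal branches at $x$, not only over the branches of algebraic curves $C\ni x$ indexing $\Delta_x$. For general $f\in\Frac(\hat{\oo}_x)^*$ (e.g.\ $f=t-g(u)$ with $g$ a non-algebraic power series) the non-algebraic branches contribute nontrivially, so the product over $\{C\ni x\}$ alone is not a case of that law. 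This is precisely why the paper replaces $\da_{\Delta_x}$ by the full adelic ring $\widehat{\da}_x$ of $\Spec\hat{\oo}_x$, checks that the two categorical central extensions agree after restriction to $GL_n(K_x)$, and proves the stronger splitting over $GL_n(\Frac{\hat{\oo}_x})$, where Theorem~4.11 and the point-reciprocity apply verbatim. To salvage your version you would have to prove that every unit of $K_x=k(X)\cdot\hat{\oo}_x$ has divisor in $\Spec\hat{\oo}_x$ supported on algebraic branches (true, since $f$ and $f^{-1}$ are both of the form $c/s$ with $c\in\hat{\oo}_x$, $s\in\oo_x$, but this is the crux and it is missing).

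For part 2) your route is genuinely different from the paper's. You propose the product formula over the points of $C$ plus the Parshin reciprocity law along a projective curve; the paper avoids that reciprocity law entirely, instead using $\da_{\Delta_C}=\da_C((t_C))$, the Steinberg relation and trimultiplicativity of $C_3$ to reduce to the cases $f_i\in\oo_{K_C}^*$ and $f_3=t_C$, a lattice-invariance argument for the first case, and a reduction of the second case to the one-dimensional reciprocity $\co(\ddet_{\da_C})(\overline{f_1},\overline{f_2})=1$ on the projective curve $C$. Your version is workable if you grant the two-dimensional reciprocity along $C$ (itself proved in \cite{OsZh}) and repair the restricted-product issue above; the paper's version buys self-containedness, needing only the one-dimensional statement. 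Part 3) in both treatments reduces to the computation of part 2) over the finitely many curves supporting the divisors of $f$, $g$ and $a$, so there the approaches coincide.
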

\begin{proof}
The method for the proof of this theorem is similar to the method for the proof of theorem~1
from~\cite{Os}.

We note that if $n=1$ then from the construction of the central extension  $\widehat{ GL_{1,a}(\da_X) }$
we have that it splits over the group $\da_X^*$. Therefore we can assume $n \ge 2$.

We fix a point $x \in X$. Let $\widehat{\da}_x$ be the adelic  ring  of the scheme $\Spec \hat{\oo}_x$. We have that
$$\widehat{\da}_x = \left\{ b = (b_F) \in \prod_F K_F  \; : \; b_F \in \oo_{K_F}  \quad \mbox{for all but finitely many $F$}  \right\}   $$
where $F$ runs over all prime ideals of height $1$ of the ring $\hat{\oo}_x$  and $K_F$ is the two-dimensional local field constructed by $F$.
 Clearly, $\widehat{\da}_x$ is a $2$-Tate vector space over the field $k$.
Therefore, as in~\cite[\S~4C]{OsZh} we construct a categorical central extension
$\ddet_{{{\widehat{\da}}_x}^n} \; \in \; \Ob(H^2(BGL_n(\widehat{\da}_{x}), \picz))  $.
We have that the restriction of the categorical central extension  $\ddet_{{{\widehat{\da}}_x}^n}$
to the subgroup $GL_n(K_x)$ of the group $ GL_n(\widehat{\da}_{x})$  is isomorphic to the restriction of the categorial central extension
$\ddet_{\da_{X}^n}$ to the subgroup $GL_n(K_x) $ of the group $ GL_n(\da_{X})$.  We consider a (usual) central extension
 $\widehat{ GL_{n,a}(\widehat{\da}_x) }$ of the group $GL_{n}(\widehat{\da}_x)$ by the group $k^*$ which is constructed by formulas analogous to formulas~\eqref{ff3}-\eqref{ff4} where we have to change
the $2$-Tate vector space $\da_{\Delta}^n$ to the $2$-Tate vector space ${\widehat{\da}_x}^n$. Now from the embedding  $GL_n(K_x) \subset
GL_n(\Frac{\hat{\oo}_x})$ and from above reasonings we have that the first statement of the theorem will follow from the spitting of
the central extension $\widehat{ GL_{n,a}(\widehat{\da}_x) }$ over the subgroup $GL_n(\Frac{\hat{\oo}_x}) $ of the group $ GL_{n}(\widehat{\da}_x)$.
To see the last splitting, we calculate $\{ f,g \}$ for any $f,g \in {\Frac{\hat{\oo}_x}}^*$  according to formula~\eqref{form}.
Similar to calculations in the proof of proposition~\ref{prop} we obtain $ \{ f,g \}  =C_3^{\ddet_{{{\widehat{\da}}_x}}} (f,g,a)$.  Using theorem~4.11 from~\cite{OsZh} and considering only a finite number of the height $1$ prime ideals $F$ of the ring $\hat{\oo}_x$ such that all
the poles and zeros
of elements $f$, $g$ and $a$ are among these ideals  $F$ we obtain
$$
C_3^{\ddet_{{{\widehat{\da}}_x}}} (f,g,a) = \prod_F \Nm_{k_F/k} (f,g,a)_{K_F}  \mbox{,}
$$
 where  $K_F$ is the corresponding to $F$ two-dimensional local field with the last residue field $k_F$ which is a finite extension of the field $k$.
From the reciprocity law  around a point for the two-dimensional tame symbol we have
$$\prod\limits_F \Nm_{k_F/k} (f,g,a)_{K_F}=1  \mbox{.}$$
Hence and from formula~\eqref{coh} we obtain that the central extension $\widehat{ GL_{n,a}(\widehat{\da}_x) }$ splits  over the subgroup $GL_n(\Frac{\hat{\oo}_x}) $ of the group  $ GL_{n}(\widehat{\da}_x)$. Thus, we have proved the first statement of the theorem.

We fix a projective irreducible curve $C$ on $X$.
Let $\Delta_C$ be the set of all pairs $\{x \in C\}$  when the curve $C$ is fixed. We note that
$\da_{\Delta_C} = \da_C((t_C))$,
where $\da_C$ is the adelic ring of the curve $C$ and $t_C =0$ is a local equation of the curve $C$ on some open affine subset of $X$.
From statement~\ref{delta} of proposition~\ref{restr} we have that
to prove the splitting of the central extension  $\widehat{ GL_{n,a}(\da_X) }$
over the subgroup $GL_n(K_C)$ of the group $GL_n(\da_X)$ it is enough
to prove the splitting of
the central extension $\widehat{ GL_{n,a}(\da_{\Delta_C}) }$ over the subgroup  $GL_n(K_C)$ of the group $GL_n(\da_{\Delta_C})$.
Moreover, we will assume that $a \in K_C^*$ (recall that $k(X)^*  \subset K_C^*$).

To prove the splitting of the central extension  $\widehat{ GL_{n,a}(\da_{\Delta_C}) }$
over the subgroup $GL_n(K_C)$ of the group $GL_n(\da_{\Delta_C})$ we will use formula~\eqref{form}. According to formula~\eqref{coh}, it is enough to obtain
$\{ f,g \}=1$  for any $f,g \in K_C^*$.
Similar to calculations in the proof of proposition~\ref{prop} we obtain $ \{ f,g \}  =C_3^{\ddet_{\da_{\Delta_C}}} (f,g,a)$.
From the Steinberg relation we have  $\{ h ,  h\}= \{ -1, h \}$ for any $h \in K_C^*$.
  Besides, $C_3^{\ddet_{\da_{\Delta_C}}} (\cdot, \cdot , \cdot)$  is a trimultiplicative and antisymmetric map.
  Therefore to prove
  \begin{equation}  \label{c3ca}
  C_3^{\ddet_{\da_{\Delta_C}}} (f_1,f_2,f_3) =1  \qquad  \mbox{for any} \qquad  f_1, f_2, f_3 \in K_C^*
  \end{equation}
  it is enough to consider the following two cases: 1) $f_i \in \oo_{K_C}^*$ for any $i$  and  2) $f_1, f_2 \in  \oo_{K_C}^*$, $f_3 =t_C$
  (we recall that $\oo_{K_C}$ is the discrete valuation ring of the field $K_C$). The first case follows from the fact that
  $f_i \cdot \da_C[[t_C]]= \da_C[[t_C]]$  and $\da_C[[t_C]]$ is a lattice in the $2$-Tate vector space $\da_C((t_C))$ (see lemma~4.10 from~\cite{OsZh}). Using formula~\eqref{C3} and analogs of lemma~4.12 and diagram (4-8) from~\cite{OsZh} for the $2$-Tate vector space $\da_C((t_C))$, the second case is reduced to the following
   \begin{multline*}
   C_3^{\ddet_{\da_{\Delta_C}}} (f_1,f_2,t_C)= C_3^{\ddet_{\da_{\Delta_C}}} (t_C, f_1,f_2)= \\ =
   \co(C_{t_C}^{\ddet_{\da_{\Delta_C}}})(f_1,f_2)=
   \co(\ddet_{\da_C}) (\overline{f_1}, \overline{f_2})^{-1}  \mbox{,}
   \end{multline*}
    where $\overline{f_1}, \overline{f_2}  \in k(C)^*$, i.e. it is reduced to the case of the projective curve $C$.  Now the  equality
    $\co(\ddet_{\da_C}) (\overline{f_1}, \overline{f_2}) =1 $
     follows
   from~\cite[\S~5A]{OsZh}.  Thus, we have proved the second statement of the theorem.

   The proof of the splitting of the central extension
 $\widehat{ GL_{n,a}(\da_X) }$ over the subgroup $GL_n(k(X))$ is analogous to the prove of the second statement of the theorem.
 For any $f,g \in k(X)^*$ we calculate $\{ f,g \}= C_3^{\ddet_{\da_{X}}} (f,g,a)$. Now we have
 $$C_3^{\ddet_{\da_{X}}} (f,g,a)=
 \prod_{i=1}^l C_3^{\ddet_{\da_{\Delta_{E_i}}}} (f,g,a) \mbox{,}$$
where the product is taken  over a finite set of irreducible curves $E_i$ on $X$ such that the union of the supports of divisors $(f)$, $(g)$ and $(a)$
is a subset of the union of these curves $E_i$. Now from formula~\eqref{c3ca} we obtain that $C_3^{\ddet_{\da_{\Delta_{E_i}}}} (f,g,a)$
for any $1 \le i \le l$. Thus $\{f,g \}=1$ and we have proved the third statement of the theorem. The theorem is proved.
\end{proof}

\begin{nt} {\em
From the proof of the above theorem we obtain also  the following two statements.
\begin{enumerate}
\item
 Let $x$ be a point on $X$ and $\Delta_x$
 be the set of all pairs $\{ x \in C\}$ when $x$ is fixed.
 We  fix an element  $a \in K_x^*$. Then the central extension $\widehat{ GL_{n,a}(\da_{\Delta_x}) }$ of the group $GL_n(\Delta_x)$
by the group $k^*$ splits over the subgroup $GL_n(K_x)$ of the group $GL_n(\Delta_x)$.
\item Let $C$ be an irreducible projective curve on $X$ and $\Delta_C$
 be the set of all pairs $\{ x \in C\}$ when $C$ is fixed.
 We  fix an element  $a \in K_C^*$.
 Then the central extension $\widehat{ GL_{n,a}(\da_{\Delta_C}) }$ of the group $GL_n(\Delta_C)$
by the group $k^*$ splits over the subgroup $GL_n(K_C)$ of the group $GL_n(\Delta_C)$.
\end{enumerate}
}
\end{nt}

\begin{nt} { \em
We note that the reciprocity laws with values in cohomology groups  for partial symmetrized cocycles were proved in~\cite{BM}
for a two-dimensional complex analytic space by topological methods. Therefore the reciprocity laws from theorem~\ref{th1}
are analogs  of the reciprocity laws from~\cite{BM} for an algebraic surface defined over a field of any characteristic.
}
\end{nt}

\vspace{0.3cm}

\noindent
Steklov Mathematical Institute of RAS \\
Gubkina str. 8, 119991, Moscow, Russia \\
{\it E-mail:}  ${d}_{-} osipov@mi.ras.ru$ \\

\end{document}